\setlist{leftmargin=6.5mm}
\setlist[enumerate]{leftmargin=8mm}
\newcommand{\Z}{\mathbb{Z}}
\newcommand{\R}{\mathbb{R}}
\DeclareMathOperator{\lk}{lk}
\newcommand{\sgn}{\operatorname{sgn}}
\newcommand{\sign}{\operatorname{sign}}
\newcommand{\mult}{\operatorname{mult}}
\newtheorem{thm}{Theorem}
\newtheorem{lem}{Lemma}[section]
\newtheorem{prop}{Proposition}[section]
\newtheorem{cor}{Corollary}[section]
\theoremstyle{definition}
\newtheorem{conj}{Conjecture}
\newtheorem{exmp}{Example}[section]
\theoremstyle{remark}
\newtheorem{rem}{Remark}[section]
 \title{On the Kashaev signature conjecture}
\author{David Cimasoni}
\address{David Cimasoni -- Section de math\'ematiques, Universit\'e de Gen\`eve, Suisse}
\email{david.cimasoni@unige.ch}
\author{Livio Ferretti}
\address{Livio Ferretti -- Section de math\'ematiques, Universit\'e de Gen\`eve, Suisse}
\email{livio.ferretti@unige.ch}
\begin{document}

\makeatletter
   \providecommand\@dotsep{5}
 \makeatother


\begin{abstract}
 In 2018, Kashaev introduced a square matrix indexed by the regions of a link diagram and conjectured that it provides a way of computing the Levine-Tristram signature and Alexander polynomial of the corresponding oriented link.
In this article, we show that for the classical signature (i.e. the Levine-Tristram signature at~$-1$), this conjecture follows from the work of Gordon-Litherland. We also relate Kashaev's matrix to Kauffman's ``Formal Knot Theory'' model of the Alexander polynomial. As a consequence, we establish the Alexander polynomial and classical signature parts of the conjecture for arbitrary links, as well as the full conjecture for definite knots.
\end{abstract}

\keywords{Link diagrams, Levine-Tristram signature, Alexander polynomial}

\subjclass{57K10}

\maketitle
	
	
	\section{Introduction}
	\label{sec:intro}
	
	The Levine-Tristram signature~$\sigma_L$ and Alexander polynomial~$\Delta_L$ of an oriented link~$L$ in
	the 3-sphere~$S^3$ are among the most studied and best understood link invariants.
	They can be defined as follows.
	Let~$F$ be a Seifert surface for~$L$, i.e. an oriented connected compact surface~$F$ smoothly embedded
	in~$S^3$ with oriented boundary~$\partial F=L$. Let
	\[
	\alpha\colon H_1(F;\Z)\times H_1(F;\Z)\longrightarrow \Z
	\]
	 be the associated {\em Seifert form\/}, i.e. the bilinear map defined by~$\alpha([x],[y])=\lk(x^-,y)$, where~$\lk$ stands for the linking number, and~$x^-\subset S^3\setminus F$ denotes the cycle~$x\subset F$
	pushed in the negative normal direction off~$F$. Writing~$A$ for an associated matrix and
	fixing~$\omega\in S^1\setminus\{1\}$,  the matrix
	\[
	H(\omega):=(1-\omega)A+(1-\overline{\omega})A^T
	\]
	is Hermitian and therefore has a well-defined signature, namely the number of positive eigenvalues minus the
	number of negative ones. The {\em Levine-Tristram signature\/} of~$L$~\cite{levine_knot_1969,Tri69} is the map
	\[
	\sigma_L\colon S^1\setminus\{1\}\longrightarrow \Z
	\]
	given by~$\sigma_L(\omega)=\sign H(\omega)$.
	This map is a well-defined link invariant, i.e. depends neither on the choice of the Seifert surface~$F$,
	nor on the choice of a basis of its homology (see e.g.~\cite{lickorish_introduction_1997}).
	The same methods can be used to show that the {\em Alexander polynomial\/}
	\[
	\Delta_L(t)=\det\big(t^{1/2}A-t^{-1/2}A^T\big)\in\Z[t^{\pm 1/2}]
	\]
	is also a well-defined invariant of the oriented link~$L$. (This normalized version is often referred to as the {\em Alexander-Conway polynomial\/} of~$L$~\cite{alexander_topological_1928,Con70,Kau81}).
	
	These invariants enjoy numerous alternative definitions, most of them dating back several decades
	(see~\cite{conway_levine-tristram_2019} for a survey of the Levine-Tristram signature).
	It therefore came as a surprise when, in a recent attempt to understand the metaplectic invariants of
	Goldschmidt-Jones~\cite{Goldman-Jones},
	Kashaev conjectured a novel way to compute the Levine-Tristram signature and
	Alexander polyomial of an oriented link~\cite{kashaev_symmetric_2021}. We now recall his construction.
	
	\medskip
	
	To any oriented link diagram~$D$, Kashaev associated a matrix~$\tau_D$ indexed by the regions of~$D$
	with coefficients in the polynomial ring~$\Z[x]$. It is defined as the sum over all crossings~$c$ of~$D$ of the matrix
\begin{equation}
\label{eq:tau}
	\tau_c=\sgn(c)\;\;\begin{blockarray}{ccccc}
i & j & k & \ell \\
\begin{block}{(cccc)c}
  2x^2-1 & x & 1 & x & i\\ x & 1 & x & 1 &j\\ 1 & x & 2x^2-1 & x & k \\ x & 1 & x & 1 & \ell\\
\end{block}
\end{blockarray}\,,
\end{equation}
where~$\sgn(c)=\pm 1$ denotes the sign of~$c$, and the regions~$i,j,k,\ell$ around the crossing~$c$ are labeled as illustrated in Figure~\ref{fig:regions}. Equation~\eqref{eq:tau} should be understood as describing the non-vanishing coefficients of a matrix indexed by the regions of~$D$; also, if the diagram is not {\em reduced\/}, i.e. if the regions~$i,j,k,\ell$ around a crossing~$c$ are not all distinct, then one should add the corresponding rows and columns of~$\tau_c$.
One easily checks that the coefficients of~$\tau_D$ actually belong to the
ring~$\Z[2x]$.
	\begin{figure}[H]
	\centering
\begin{overpic}[width=2cm]{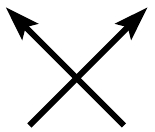}
 \put (50,65){$i$}
    \put (15,35){$j$}
        \put (50,0){${k}$}
        \put (85,35){${\ell}$}
    \end{overpic}
    \caption{Labeling of the regions in Equation~\eqref{eq:tau}}
    \label{fig:regions}
		\end{figure}

 \begin{exmp}
 	Consider the positive trefoil knot, with diagram~$D$ and regions numbered as follows.
 	\begin{center}
 		\begin{tikzpicture}[use Hobby shortcut,scale=0.7,add arrow/.style={postaction={decorate}, decoration={
 				markings,
 				mark=at position 0.5 with {\arrow[line width=1pt]{>}}}}]
 		\begin{knot}[
 		consider self intersections=true,
 		ignore endpoint intersections = false,
 		flip crossing/.list={3,5}
 		]
 		\strand[very thick] ([closed]90:2) foreach \k in {1,...,3} { .. (90-360/3+\k*720/3:1) .. (90+\k*720/3:2) } (90:2);
 		\end{knot}
 		\node at (0,0) []{1};
 		\node at (-1.5,1) []{2};
 		\node at (0,1.4) []{3};
 		\node at (1.3,-0.5) []{4};
 		\node at (-1.3,-0.5) []{5};
 		\end{tikzpicture}
 		\end{center}
 The associated Kashaev matrix is given by
 \[
 \tau_D = \begin{pmatrix} 
 	3 & 3 & 2x & 2x & 2x \\ 3 & 3 & 2x  & 2x & 2x \\ 2x & 2x & 4x^2-2 & 1 & 1 \\ 2x & 2x & 1 & 4x^2-2 & 1 \\ 2x & 2x & 1 & 1 & 4x^2-2 \\
 	\end{pmatrix}\,.
 \]
 \end{exmp}

\medskip

Kashaev then studied the effect of Reidemeister moves on~$\tau_D$, obtaining the following result: given any oriented link diagram~$D$ and any~$x\in\R$, the integer
\[
\sign(\tau_D[x]) - w(D)
\]
is a link invariant, where~$\sign(\tau_D[x])$ is the signature of the symmetric matrix~$\tau_D$ evaluated at $x\in\mathbb{R}$, and~$w(D)$ is the {\em writhe\/} of~$D$, namely the number of positive crossings minus the number of
negative ones. Even though not stated formally in~\cite{kashaev_symmetric_2021}, it also follows from this
investigation that the torsion of the~$\Z[2x]$-module presented by~$\tau_D$ is an invariant.
(In fact, the isomorphism class of the full module is an invariant provided~$D$ is connected).
In particular, its determinantal ideals yield invariant polynomials (up to multiplication by a unit of~$\Z[2x]$, i.e. up to sign).
Kashaev conjectured that these are well-known invariants.
	
\begin{conj}[Kashaev~\cite{kashaev_symmetric_2021}]
\label{conj}
	Let~$D$ be an oriented diagram for an oriented link~$L$.
	\begin{enumerate}
	\item If~$\widetilde{\tau}_D$ denotes the matrix~$\tau_D$ evaluated at~$2x=t^{1/2}+t^{-1/2}$ with two
	rows and columns corresponding to two adjacent regions removed, then we have
			\[
			\det(\widetilde{\tau}_D) = \pm\Delta_L(t)^2\in\Z[t^{\pm 1/2}]\,.
			\]
	\item For any~$\omega \in S^1\setminus\{1\}$, we have
		\[
		\sign(\tau_D[x]) - w(D)=2 \sigma_L(\omega)\,,
		\]
		where~$2x = \omega^{1/2}+\omega^{-1/2} \in \mathbb{R}$.
	\end{enumerate}
\end{conj}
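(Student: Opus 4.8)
\emph{Overall plan.} I would attack the two parts by two different classical inputs: part~(i) via Kauffman's ``Formal Knot Theory'' state sum for the Alexander polynomial, and part~(ii) in the case $\omega=-1$ via the Gordon--Litherland formula for the link signature. (The Gordon--Litherland circle of ideas only concerns the ordinary signature, so I would not expect it to yield part~(ii) at a general $\omega\in S^1\setminus\{1\}$; that case, and with it the full conjecture, I would only aim to establish for definite knots.) Both parts rest on one computation: setting $w_c:=x(e_i+e_k)+(e_j+e_\ell)$, one checks directly that
\[
\tau_c=\sgn(c)\,\big[\,w_c\,w_c^{T}+(x^{2}-1)(e_i-e_k)(e_i-e_k)^{T}\,\big],
\]
so that $\tau_c$ has rank $2$ and splits as a signed sum of two rank-one pieces: an ``incidence-type'' piece $w_c w_c^{T}$ touching all four regions, and a ``Laplacian-type'' piece supported on the opposite pair $\{i,k\}$.

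\emph{Part (i).} After the substitution $2x=t^{1/2}+t^{-1/2}$ one has $x^{2}-1=\big(\tfrac{t^{1/2}-t^{-1/2}}{2}\big)^{2}$, so the normal form reads $\tau_c=\sgn(c)(w_c w_c^{T}+v_c v_c^{T})$ with $v_c:=\tfrac{t^{1/2}-t^{-1/2}}{2}(e_i-e_k)$, and a short calculation gives the pleasant identity $w_c+v_c=t^{1/2}e_i+e_j+t^{-1/2}e_k+e_\ell$ --- which is exactly Kauffman's local Alexander weight at the crossing $c$ (note the Conway--Alexander skein variable $t^{1/2}-t^{-1/2}$ sitting in $v_c$). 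I would then show that these local data assemble into the global matrix identity
\[
\tau_D=M_D^{\,T}\,\Sigma\,M_D ,
\]
where $M_D$ is Kauffman's (crossing~$\times$~region) Alexander matrix and $\Sigma=\operatorname{diag}(\sgn(c))_c$; concretely this amounts to the combinatorial cancellation $\sum_c\sgn(c)(w_c v_c^{T}+v_c w_c^{T})=0$, reflecting how the corner occupied by a fixed region alternates over the crossings on its boundary. Deleting two \emph{adjacent} regions removes two rows of $M_D^{\,T}$ and the two matching columns of $M_D$; writing $\widetilde M_D$ for the remaining $n\times n$ Kauffman matrix one gets $\widetilde\tau_D=\widetilde M_D^{\,T}\Sigma\widetilde M_D$, hence $\det\widetilde\tau_D=\det\Sigma\cdot(\det\widetilde M_D)^{2}=\pm\,\Delta_L(t)^{2}$ by Kauffman's model $\det\widetilde M_D\doteq\Delta_L(t)$ for an admissible choice of deleted regions. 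Along the way one must match the ``two adjacent regions'' on the $\tau$-side with a legitimate Kauffman ``star'', keep the overall unit under control, and check that non-reduced diagrams cause no trouble (adding rows and columns of $\tau_c$ matching adding Kauffman rows).

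\emph{Part (ii), $\omega=-1$.} Here $\omega=-1$ corresponds to $2x=0$, and the normal form gives $\tau_c[0]=\sgn(c)\big[(e_j+e_\ell)(e_j+e_\ell)^{T}-(e_i-e_k)(e_i-e_k)^{T}\big]$, supported on the two pairs of opposite regions separately. Hence $\tau_D[0]$ is block diagonal with respect to any checkerboard colouring, $\tau_D[0]=\Gamma_{\mathrm w}\oplus\Gamma_{\mathrm b}$. I would identify each block, up to integral congruence and modulo its one-dimensional radical, with a Goeritz matrix of the corresponding checkerboard spanning surface $F_{\mathrm w}$, $F_{\mathrm b}$, the incidence-type vs.\ Laplacian-type alternative above matching the type of the crossing relative to the colouring (and the orientation signs being reconciled with the Goeritz signs). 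Then the Gordon--Litherland theorem $\sigma(L)=\sign(\mathcal G_F)-\tfrac12 e(F)$ applied to $F_{\mathrm w}$ and $F_{\mathrm b}$, summed and combined with the classical relation $e(F_{\mathrm w})+e(F_{\mathrm b})=2\,w(D)$ between the normal Euler numbers of the two checkerboard surfaces and the writhe, yields $2\,\sigma_L(-1)=\sign(\Gamma_{\mathrm w})+\sign(\Gamma_{\mathrm b})-w(D)=\sign(\tau_D[0])-w(D)$.

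\emph{Main obstacle, and the definite case.} In both parts the essential difficulty is passing from the sum of rank-$2$ \emph{local} pieces to a single clean \emph{global} object --- and doing it integrally (over $\Z$, resp.\ $\Z[t^{\pm1/2}]$), not merely over $\R$, so that the module statement in part~(i) and the signature statement in part~(ii) both follow. For part~(i) this is the cancellation giving $\tau_D=M_D^{\,T}\Sigma M_D$ together with the unit bookkeeping; for part~(ii) it is the integral congruence of each block $\Gamma_\bullet$ with an honest Goeritz matrix despite the incidence-type pieces and the appearance of orientation signs rather than Goeritz signs. I expect this last congruence to be the more delicate point. Finally, for a definite knot $K$ the Levine--Tristram function $\sigma_K$ is pinned down by the locations and orders of its jumps --- read off from the roots of $\Delta_K$, hence controlled by part~(i) --- together with a single normalization, supplied by the value at $\omega=-1$ just computed; combining these gives part~(ii) for all $\omega$, and with it the full conjecture, whenever $K$ is definite.
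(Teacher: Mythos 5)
Your architecture is the same as the paper's: part (i) via the identity $\tau_D=K(D)^TS(D)K(D)$ with Kauffman's matrix (your normal form $\tau_c=\sgn(c)(w_cw_c^T+v_cv_c^T)$ with $w_c+v_c$ equal to the Kauffman row is a clean repackaging of the paper's Proposition~\ref{proposition_alexander}, and your cancellation $\sum_c\sgn(c)(w_cv_c^T+v_cw_c^T)=0$ is exactly the corner-alternation argument proved there), part (ii) at $\omega=-1$ via the splitting $\tau_D[0]=\tau_{\sf{w}}(D)\oplus\tau_{\sf{b}}(D)$ and two applications of Gordon--Litherland, and a jump analysis for definite knots. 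One point you flag but do not resolve does require an actual idea: for a general diagram the off-diagonal entries of each block of $\tau_D[0]$ carry the orientation signs $t_{ij}$ on top of the Goeritz entries, and the needed congruence amounts to writing $t_{ij}=\epsilon_i\epsilon_j$ for a diagonal $\pm1$ change of basis. The paper achieves this by first passing to a \emph{special diagram}, for which $t_{\sf{w}}\equiv 1$, so that the white block \emph{is} the Goeritz matrix and the adjacency graph of the black regions is bipartite --- precisely the statement that $t_{ij}$ is a coboundary. Without some such reduction (legitimate because both sides are Reidemeister invariants) the congruence is not clear.

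The genuine gap is in your definite-knot step. You propose to pin down the Kashaev step function by its possible jump locations plus ``a single normalization, supplied by the value at $\omega=-1$.'' But part (i) only yields an \emph{upper bound} $2\mult_\omega(\Delta_K)$ on the jumps of $\sign(\tau_D[x])$ (via the nullity of $\widetilde{\tau}_D$), not their actual sizes or signs; knowing $\sign(\tau_D[0])-w(D)=2\sigma(K)$ and where jumps may occur still leaves many candidate step functions (for instance the constant one). The paper needs a second anchor, $\sign(\tau_D[1])=w(D)$ (Lemma~\ref{proposition_signature_writhe}), which requires its own small argument using $\Delta_K(1)=\pm1\neq 0$ for knots together with Sylvester's law of inertia. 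Only then does the total variation between $\omega=1$ and $\omega=-1$, namely $|2\sigma(K)|=4g(K)$, saturate the available jump budget $\sum_\omega 4\mult_\omega(\Delta_K)$ and force every jump of $\sign(\tau_D[x])$ to be maximal and of the correct sign, hence equal to twice the corresponding jump of $\sigma_K$. Add that endpoint computation and your outline closes up; without it the conclusion does not follow.
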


Note that only the second point of this conjecture was explicitely stated in~\cite{kashaev_symmetric_2021}.
Nevertheless, the first point is a rather obvious guess from the examples computed in~\cite{kashaev_symmetric_2021}, and was discussed by the authors with Rinat Kashaev, hence
the attribution.
	
	\medskip
	
	 In the present article, we prove the first point of this conjecture in full generality, the second point for the {\em classical signature\/}~$\sigma(L)=\sigma_L(-1)$~\cite{Tro62}, and the full conjecture for {\em definite knots\/},
	 namely knots admitting a Seifert matrix~$A$ such that~$A+A^T$ is (positive or negative) definite. In other words, we have the following result.
	
	\begin{thm}\label{thm}
		 Let~$D$ be an oriented diagram for an oriented link~$L$.
		 \begin{enumerate}
			\item 
			The matrix~$\widetilde{\tau}_D$ satisfies~$\det(\widetilde{\tau}_D) = \pm\Delta_L(t)^2$.
			\item We have the equality~$\sgn(\tau_D[0]) - w(D)=2 \sigma(L)$.
			\item If~$L$ is a definite knot, then the equality~$\sgn(\tau_D[x]) - w(D)=2 \sigma_L(\omega)$
			holds for all~$\omega\in S^1\setminus\{1\}$, with~$2x= \omega^{1/2}+\omega^{-1/2} $.
		\end{enumerate}
	\end{thm}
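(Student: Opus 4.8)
We prove the three parts essentially independently, matching $\tau_D$ in each case with a classical combinatorial model of the invariant at hand; throughout, $n$ denotes the number of crossings of $D$, so that $D$ has $n+2$ regions.

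\textbf{(1)} Set $s=t^{1/2}$, so $2x=s+s^{-1}$. Comparing the (at most six) distinct entries one checks the factorisation
\[
\tau_c=\tfrac12\sgn(c)\big(a_c\,a_c^{\,T}+\overline{a_c}\,\overline{a_c}^{\,T}\big),
\]
where $a_c$ is supported on the regions around $c$ with entries $(s,1,s^{-1},1)$ at the positions $(i,j,k,\ell)$ of Figure~\ref{fig:regions}, $\overline{a_c}$ is its image under $s\mapsto s^{-1}$, and the identity is unaffected by regions coinciding around $c$; this also explains why $\tau_D$ has coefficients in $\Z[s+s^{-1}]=\Z[2x]$. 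Let $\mathcal A$ be the $n\times(n+2)$ matrix with rows $a_c^{T}$ (crossings against regions) and $E=\operatorname{diag}(\sgn c)$. A short verification — the $\pm$-powers of $s$ around each region cancelling in the signed sum — gives $\overline{\mathcal A}^{T}E\,\overline{\mathcal A}=\mathcal A^{T}E\mathcal A$, hence $\tau_D=\mathcal A^{T}E\mathcal A$; deleting the rows and columns of two adjacent regions then yields $\widetilde\tau_D=\widetilde{\mathcal A}^{T}E\,\widetilde{\mathcal A}$ with $\widetilde{\mathcal A}$ now square $n\times n$, so $\det\widetilde\tau_D=\pm\det(\widetilde{\mathcal A})^{2}$. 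Finally, the Leibniz expansion of $\det\widetilde{\mathcal A}$ is a sum over incidence‑respecting bijections between crossings and the remaining regions, i.e.\ over the states of Kauffman's \emph{Formal Knot Theory} model; after matching over/under and orientation conventions it equals (up to a unit) Kauffman's state sum, so $\det\widetilde{\mathcal A}=\pm\Delta_L(t)$ by the Clock Theorem — a spurious factor $t^{k}$ being excluded because $\det\widetilde{\mathcal A}$ inherits from $\widetilde\tau_D\in\Z[2x]$ the symmetry $\det\widetilde{\mathcal A}(t^{-1})=\pm\det\widetilde{\mathcal A}(t)$. Hence $\det\widetilde\tau_D=\pm\Delta_L(t)^{2}$.

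\textbf{(2)} At $x=0$ the matrix~\eqref{eq:tau} becomes $\sgn(c)\big[(e_j+e_\ell)(e_j+e_\ell)^{T}-(e_i-e_k)(e_i-e_k)^{T}\big]$. Fix a checkerboard colouring of $D$ (one exists, the underlying $4$-valent graph having all vertices of even degree). The two summands are supported on the two white, respectively the two black, regions around $c$, so $\tau_D[0]$ is block diagonal, one block on the white and one on the black regions; and each block is, up to an overall sign and after reconciling the $(e_a\pm e_b)$ shapes with the usual conventions, the Goeritz matrix of the corresponding checkerboard spanning surface $F_W$, $F_B$, i.e.\ the matrix of its Gordon–Litherland form. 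Applying the Gordon–Litherland formula $\sigma(L)=\operatorname{sign}(\mathcal G_F)-\tfrac12 e(F)$ to $F_B$ and to $F_W$ and adding gives $\operatorname{sign}(\tau_D[0])=2\sigma(L)+\tfrac12\big(e(F_B)+e(F_W)\big)$; a local computation, each crossing contributing $2\sgn(c)$ to $e(F_B)+e(F_W)$, then yields $\tfrac12\big(e(F_B)+e(F_W)\big)=w(D)$, whence $\operatorname{sign}(\tau_D[0])-w(D)=2\sigma(L)$. The work here is entirely in the sign bookkeeping; nothing is structurally new beyond Gordon–Litherland.

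\textbf{(3)} Write $\omega=e^{i\theta}$ and $x=\cos(\theta/2)$, so $x$ decreases monotonically from $1$ to $-1$ as $\theta$ runs over $(0,\pi)$ (the range $(\pi,2\pi)$ being symmetric). Since $\operatorname{sign}(\tau_D[x])-w(D)$ is a Reidemeister invariant, we may fix any diagram $D$ of $L$. By~(1), as a function of $x$ the relevant determinantal minor of $\tau_D[x]$ vanishes precisely at the $x$ coming from the roots of $\Delta_L$ on $S^1$, to twice their order; hence $\operatorname{sign}(\tau_D[x])-w(D)-2\sigma_L(\omega)$ is locally constant off these finitely many points, and vanishes at $x=0$ by~(2). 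It remains to match the jumps at each such $x_0$, and this is where definiteness enters. On the one hand, for a definite knot all roots of $\Delta_L$ lie on $S^1$, and computing the derivative of the eigenvalues of $(1-\omega)A+(1-\overline\omega)A^{T}$ at a kernel vector shows that on the arc $\theta\in(0,\pi)$ this Hermitian family has only coherent zero-crossings; thus $\sigma_L$ is monotone there with jump $\pm 2m$ at a root of multiplicity $m$ (the order of vanishing of $\det\big((1-\omega)A+(1-\overline\omega)A^{T}\big)\doteq\Delta_L$), and total variation $-\sigma(L)$. On the other hand, writing $\tau_D[x]=W^{T}EW-\sin^2(\theta/2)\,U^{T}EU$ (with $W$, $U$ the real and imaginary parts of $\mathcal A$ at $s=e^{i\theta/2}$) and choosing a diagram for which $U^{T}EU$ is semidefinite — e.g.\ a special alternating one — the eigenvalues of $\tau_D[x]$ are monotone in $\sin^2(\theta/2)$; so $\operatorname{sign}(\tau_D[x])$ is monotone in $x$ with coherent crossings and jump $\pm 2\cdot 2m$ at $x_0$ (twice the order of vanishing of $\det\widetilde\tau_D=\pm\Delta_L^2$), and total variation $-2\sigma(L)$. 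At every root the $\tau_D$-jump is thus exactly twice the $\sigma_L$-jump; since the two sides agree at $x=0$, the identity $\operatorname{sign}(\tau_D[x])-w(D)=2\sigma_L(\omega)$ extends to all $\omega\in S^1\setminus\{1\}$.

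\textbf{Expected main obstacle.} Parts~(1) and~(2) follow from the Kauffman and Gordon–Litherland models with only careful (if lengthy) bookkeeping. The crux is the jump-matching in~(3): a priori, at an $x_0$ coming from a root of $\Delta_L$ of multiplicity $m$, where $\det\widetilde\tau_D$ vanishes to order $2m$, the step function $\operatorname{sign}(\tau_D[x])$ could jump by anything from $-2m$ to $2m$ in steps of $2$, whereas the required value is $2$ times the (maximal) Levine–Tristram jump $\pm 2m$. Showing that the eigenvalues of $\tau_D[x]$ crossing $0$ at $x_0$ all cross in the \emph{same} direction — so that the jump is maximal and matches — is precisely where the definiteness hypothesis is essential; for arbitrary knots or links the crossings need not be coherent, which is exactly why the full conjecture is obtained here only for definite knots.
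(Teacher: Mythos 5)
Your part~(1) is, in substance, the paper's own argument in a different packaging: your identity $\tau_D=\mathcal{A}^{T}E\mathcal{A}$, with $\mathcal{A}$ the crossings-by-regions matrix of Kauffman labels, is exactly Proposition~\ref{proposition_alexander} ($\tau_D=K(D)^{T}S(D)K(D)$), and the conclusion follows from Kauffman's theorem as in the paper. The local factorisation $\tau_c=\tfrac12\sgn(c)(a_ca_c^{T}+\overline{a_c}\,\overline{a_c}^{T})$ is a clean way to organise the check, but be aware that the ``short verification'' $\overline{\mathcal{A}}^{T}E\overline{\mathcal{A}}=\mathcal{A}^{T}E\mathcal{A}$ is where the real work hides: it is the paper's pairing argument along the boundary of each region (consecutive non-coherently oriented corners contributing $t$ and $t^{-1}$ with matching crossing signs), and it is not a one-line cancellation. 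Granting that verification, (1) is correct.

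Parts~(2) and~(3) have genuine gaps. In~(2), the claim that each checkerboard block of $\tau_D[0]$ ``is, up to an overall sign, the Goeritz matrix'' is false for a general diagram: a crossing $c$ contributes $\sgn(c)=\eta_{\mathsf{v}}(c)\,t_{\mathsf{v}}(c)$ to the off-diagonal entries of the block, versus $\eta_{\mathsf{v}}(c)$ for the Goeritz matrix, and the discrepancy $t_{\mathsf{v}}(c)$ varies from one pair of regions to another — it is not an overall sign. What is true is a congruence $\tau_{\mathsf{v}}(D)=P^{T}G_{\mathsf{v}}(D)P$ for a diagonal $\pm1$ matrix $P$, and producing $P$ is the actual content of the step: the paper passes to a special diagram (so that one block literally equals its Goeritz matrix) and obtains $P$ for the other block from the bipartiteness of the black adjacency graph, then transports the identity to all diagrams by Kashaev's invariance of $\sign(\tau_D[x])-w(D)$. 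Your ``sign bookkeeping'' remark papers over precisely this point. In~(3) the problems are more serious. First, you choose ``a special alternating diagram'' for an arbitrary definite knot; no such diagram is known to exist in that generality, so the reduction is unjustified. Second, even granting it, in the decomposition $\tau_D[x]=W^{T}EW-\sin^{2}(\theta/2)\,U_0^{T}EU_0$ the matrix $W$ also depends on $\theta$ (its entries are $\cos(\theta/2)$ or $1$), so semidefiniteness of $U_0^{T}EU_0$ does not make the eigenvalues monotone in $\sin^{2}(\theta/2)$. Third, coherence of the zero-crossings only gives a jump equal to twice the nullity; maximality additionally requires the nullity of $\tau_D[x_0]$ to equal the full order of vanishing of $\det\widetilde{\tau}_D$, which you do not address. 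Finally, your total-variation count needs the anchor $\sign(\tau_D[1])=w(D)$, which rests on $\Delta_K(1)\neq0$ (Lemma~\ref{proposition_signature_writhe}) and is missing from your write-up. The paper avoids all pointwise eigenvalue analysis by a squeeze: the values at $x=1$ and $x=0$ differ by $|2\sigma(K)|=4g(K)$, while the jump bounds of Lemma~\ref{proposition_kashaev_jumps} together with the bound of $2g(K)$ on the number of roots of $\Delta_K$ leave no slack, forcing every jump of $\sign(\tau_D[x])$ (and of $\sigma_K$) to be maximal; this is the argument you should adopt for the crux you correctly identified.
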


Our proof of the first point relies on a relation between the Kashaev matrix~$\tau_D$ and
Kauffman's ``Formal Knot Theory'' model of the Alexander polynomial~\cite{kauffman_formal_1983,kauffman2006remarks}.
As for the second point, it follows from relating~$\tau_D[0]$ with two copies of the Goeritz matrix
and harnessing the  Gordon-Litherland formula for the classical signature~\cite{gordon_signature_1978}. 
These two results then imply the third one in a rather straightforward way (and actually yield the conjecture for a wider class of knots; see Remark~\ref{rem:ext}).

Therefore, the present article not only provides a proof of parts of the Kashaev conjecture. Its purpose is also
to show that this conjecture can be understood as a reformulation of Kauffman's model for~$\Delta_L$ together with a rather surprising extension of the Gordon-Litherland theorem from the classical signature~$\sigma(L)$ to the full Levine-Tristram signature~$\sigma_L$ (see Remarks~\ref{rem:Kauffman} and~\ref{rem:G-L}).

\medskip

This paper is organised as follows. In Section~\ref{sec:prelim}, we recall the necessary background
on the aforementioned Kauffman model for~$\Delta_L$ (Section~\ref{paragraph_Kauffman_alex})
and Gordon-Litherland formula for~$\sigma(L)$ (Section~\ref{paragraph_gordon_litherland}).
Section~\ref{sec:proof} contains the proof of Theorem~\ref{thm}, each of the three points being dealt
with in an individual subsection.

\subsection*{Acknowledgments}
The authors would like to thank Sebastian Baader, Pierre Bagnoud, Anthony Conway, Livio Liechti and Rinat Kashaev for useful discussions, as well as the anonymous referee for sensible comments on an earlier version of this article.
Support from the Swiss NSF grant 200021-212085 is thankfully acknowledged.

	\section{Preliminaries}
	\label{sec:prelim}
	
This section deals with the preliminaries to the proof of Theorem~\ref{thm}. We start in Section~\ref{paragraph_Kauffman_alex} by recalling Kauffman's model for~$\Delta_L$, while
Section~\ref{paragraph_gordon_litherland} contains a brief presentation of the Gordon-Litherland
formula for~$\sigma(L)$.
	
	\subsection{The Kauffman model for the Alexander polynomial}\label{paragraph_Kauffman_alex}
	
	One of the main points of Kauffman's {\em Formal Knot Theory\/} treatise~\cite{kauffman_formal_1983} (see also~\cite{kauffman2006remarks}) is the construction of a combinatorial model for the normalized Alexander-Conway polynomial.  It can be summarized as follows.
	
Given an oriented diagram~$D$ of a link~$L$, Kauffman defined a matrix~$K(D)$ whose rows are indexed by the crossings of~$D$ and whose columns are indexed by the regions of~$D$. For any region~$i$ and crossing~$c$ of~$D$, the coefficient~$K(D)_{ci}$ is the label of the corner corresponding to the region~$i$ at the crossing~$c$, as in Figure~\ref{figure_labels_kauffman}.
(If a region abuts a corner from two sides, then the corresponding labels should be added.)
Writing~$\widetilde{K}(D)$ for the matrix obtained from~$K(D)$ by deleting two columns corresponding to adjacent regions, Kauffman proved that~$\det\widetilde{K}(D)$ is an invariant of~$L$ up
to sign, and satisfies
\begin{equation}\label{eq:Kauffman}
\det\widetilde{K}(D) =\pm\Delta_K(t)\in\Z[t^{\pm 1/2}]\,.
\end{equation}

	\begin{figure}[H]
		\centering
		\begin{tikzpicture}
			[scale=1.5, line width=3pt]  
			\node at (-1.5,0.5) []{$t^{1/2}$};
			\node at (-2.5,0.5) []{$1$};
			\node at (-2.5,-0.5)[]{$t^{-1/2}$};
			\node at (-1.5,-0.5)[]{$1$};
			\node at (2.5,0.5) []{$t^{-1/2}$};
			\node at (1.5,0.5) []{$1$};
			\node at (1.5,-0.5)[]{$t^{1/2}$};
			\node at (2.5,-0.5)[]{$1$};
			\draw (-3,0) edge [->, -stealth] (-1,0) {};
			\draw [dashed,dash pattern=on 35pt off 15pt] (-2,-1) edge [->,-stealth] (-2,1);
			\draw [dashed,dash pattern=on 35pt off 15pt] (1,0) edge [->, -stealth] (3,0) {};
			\draw  (2,-1) edge [->,-stealth] (2,1);	
		\end{tikzpicture}
		\caption{Kauffman's labels}
		\label{figure_labels_kauffman}
	\end{figure}
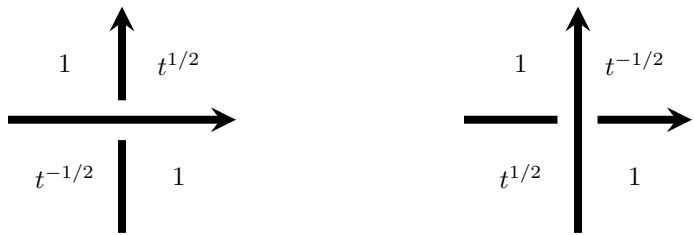
	
	\begin{rem}\label{remark_congruence}
As one easily checks, the matrix~$K(D)$ can be transformed to the matrix~$\big(\widetilde{K}(D) \;\; 0 \;\; 0 \big)$
by adding to the two last columns (corresponding to adjacent regions) linear combinations of the others.
	\end{rem}
	
	\subsection{The Gordon-Litherland formula for the classical signature}\label{paragraph_gordon_litherland}
	
	In~\cite{gordon_signature_1978}, Gordon and Litherland defined a quadratic form associated to any (non-necessarily orientable) spanning surface of a link~$L$. Using 4-dimensional techniques, they showed how this form relates the classical signature~$\sigma(L)$ to a Goeritz matrix of~$L$~\cite{Goeritz}, thus providing a simple, diagrammatic way of computing the signature. We now briefly outline this result.
	
Consider an oriented link~$L$ and an oriented diagram~$D$ for~$L$. Color the regions of $\R^2\setminus D$ with two colours, say black and white (denoted by~$\sf{b}$ and~$\sf{w}$),  in a checkerboard manner. Then, for any choice~$\sf{v}\in\{b,w\}$ among these two colours, one can associate to a crossing~$c$ of~$D$ two signs~$\eta_{\sf{v}}(c)$ and~$t_{\sf{v}}(c)$ as described in Figure~\ref{figure_eta_t}. Note that~$\eta_{\sf{v}}(c)$ depends on the under/over crossing information but not on the orientation, while~$t_{\sf{v}}(c)$ depends on the orientation, but not on the under/over crossing.
	
	\begin{figure}[htb]
		\begin{minipage}[b]{0.4\textwidth}
				\centering
				\begin{tikzpicture}
					[scale=1.7, line width=3pt]
					\fill[color=blue!30]             (-1.5,0) -- (-1,1) -- (-0.5,0) -- cycle;
					\fill[color=blue!30]             (-1.5,2) -- (-1,1) -- (-0.5,2) -- cycle;
					\draw (-0.5,0) -- (-1.5,2) node [] {};
					\draw [dashed,dash pattern=on 49pt off 10pt] (-1.5,0) edge (-0.5,2);
					\node at (-1,-0.3) []{$\eta_{\sf{v}}(c) =-1$};
					\fill[color=blue!30]             (0.5,0) -- (1,1) -- (1.5,0) -- cycle;
					\fill[color=blue!30]             (0.5,2) -- (1,1) -- (1.5,2) -- cycle;
					\draw [dashed,dash pattern=on 49pt off 10pt] (1.5,0) edge (0.5,2);
					\draw  (0.5,0) edge (1.5,2);
					\node at (1,-0.3) []{$\eta_{\sf{v}}(c) =1$};	
				\end{tikzpicture}
		\end{minipage}
		\hspace{0.15\textwidth}
		\begin{minipage}[b]{0.4\textwidth}
				\centering
				\begin{tikzpicture}
					[scale=1.7, line width=3pt,  startarrow/.style 2 args={
						decoration={             
							markings, 
							mark=at position 0.3 with {\arrow{triangle 45}, \node[#1] {#2};}
						},
						postaction={decorate}
					},endarrow/.style 2 args={
						decoration={             
							markings, 
							mark=at position 0.91 with {\arrow{triangle 45}, \node[#1] {#2};}
						},
						postaction={decorate}
					}]
					\fill[color=blue!30]             (-1.5,0) -- (-1,1) -- (-0.5,0) -- cycle;
					\fill[color=blue!30]             (-1.5,2) -- (-1,1) -- (-0.5,2) -- cycle;	
					\draw [startarrow](-0.5,0) -- (-1.5,2);
					\draw [startarrow](-1.5,0) -- (-0.5,2);
					\node at (-1,-0.3) []{$t_{\sf{v}}(c) =1$};
					\fill[color=blue!30]             (0.5,0) -- (1,1) -- (1.5,0) -- cycle;
					\fill[color=blue!30]             (0.5,2) -- (1,1) -- (1.5,2) -- cycle;		
					\draw [startarrow](1.5,0) -- (0.5,2);
					\draw [endarrow](1.5,2) -- (0.5,0);
					\node at (1,-0.3) []{$t_{\sf{v}}(c) =-1$};	
				\end{tikzpicture}
		\end{minipage}
		\caption{Definition of $\eta_{\sf{v}}(c)$ and $t_{\sf{v}}(c)$, with ${\sf{v}}$ the shaded region}
		\label{figure_eta_t}
	\end{figure}
	
The \textit{Goeritz matrix} associated to the colour~${\sf{v}}$ is the matrix~$G_{\sf{v}}(D) = (g_{ij})_{i,j}$ indexed by the regions of colour~${\sf{v}}$ and defined by
\[
g_{ij} = \sum_{c\sim i,\,c\sim j} \eta_{\sf{v}}(c)
\]
for~$i\neq j$, where the sum is over all crossings~$c$ incident to both regions~$i$ and~$j$, and by
\[
g_{ii}=- \sum_{k\neq i} g_{ik}\,.
\]
A Goeritz matrix is always symmetric, so its signature is well defined, but it is not invariant under Reidemeister moves. In order to produce an invariant, one needs to consider the correction term
	\[
	\mu_{\sf{v}}(D) = \sum\limits_{c\,:\, t_{\sf{v}}(c)=-1} -\eta_{\sf{v}}(c)\,,
	\]
	the sum being over all the crossings with~$t_{\sf{v}}(c)=-1$. Gordon and Litherland proved that for any checkerboard colour~${\sf{v}}$, we have
	\begin{equation}\label{eq:GL}
	\sign G_{\sf{v}}(D) - \mu_{\sf{v}}(D) = \sigma(L)\,.
	\end{equation}

	\begin{rem}
	The sum of all the columns in a Goeritz matrix~$G$ is equal to zero, so~$G$ is congruent to $(0)\oplus\widetilde{G}$, where $\widetilde{G}$ is obtained from~$G$ by deleting the first row and first column.
The matrix~$\widetilde{G}$ is often referred to as the Goeritz matrix, while~$G$ is sometimes called {\em pre-Goeritz\/}. However, this change is irrelevant in the computation of the signature, and it will be more practical
for us to consider the entire matrix.
	\end{rem}
	
	\section{Proof of Theorem~\ref{thm}}
	\label{sec:proof}
	
	This section contains the proof of Theorem~\ref{thm}: Section~\ref{sub:Alex} deals with the first point,
	Section~\ref{sub:sign} with the second, and Section~\ref{sub:definite} with the third.

	\subsection{The Alexander polynomial and signature jumps}
	\label{sub:Alex}
	The aim of this section is to prove the first point of Theorem~\ref{thm}, and to derive a lemma
	about the behavior of the Kashaev signature.
	
	Let us fix an oriented diagram~$D$ for an oriented link~$L$, and first focus on the relation between~$\tau_D=\tau_D[x]$ and~$\Delta_L$.
	To do so, we fix an ordering of the regions of~$D$ and of its crossings, denoted by~$c_1,\cdots,c_n$.
	Recall the associated Kauffman matrix~$K(D)$ defined in Section~\ref{paragraph_Kauffman_alex}.

	\begin{prop}\label{proposition_alexander}
	If~$D$ is a diagram for an oriented link~$L$, then we have the equality
	\[
	\tau_D[x] = K(D)^T S(D) K(D),\,
	\]
	where~$2x = t^{1/2} + t^{-1/2}$ and~$S(D)$ stands for the diagonal matrix~$\operatorname{diag}(\sgn(c_1),\cdots,\sgn(c_n))$.
	\end{prop}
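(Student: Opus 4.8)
The plan is to verify the matrix identity $\tau_D[x] = K(D)^T S(D) K(D)$ crossing by crossing, exploiting the fact that both sides are sums of local contributions indexed by the crossings of $D$. Indeed, $\tau_D = \sum_c \tau_c$ by definition, and $K(D)^T S(D) K(D) = \sum_{c} \sgn(c)\, k_c k_c^T$, where $k_c$ denotes the row of $K(D)$ corresponding to the crossing $c$, viewed as a column vector indexed by the regions of $D$. Since the only nonzero entries of $k_c$ are the four labels at the corners of $c$ (at regions $i,j,k,\ell$ in the notation of Figure~\ref{fig:regions}), the rank-one matrix $k_c k_c^T$ is supported on exactly the same $4\times 4$ block of rows and columns as $\tau_c$. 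So the identity reduces to a purely local claim: for each crossing, the $4\times 4$ matrix $\sgn(c)\, v v^T$ equals $\tau_c$, where $v = (v_i, v_j, v_k, v_\ell)^T$ is the vector of Kauffman corner labels at that crossing.

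First I would pin down, from Figures~\ref{fig:regions} and~\ref{figure_labels_kauffman}, which corner label sits in which region $i,j,k,\ell$ around a crossing, being careful about both crossing signs (positive and negative) and about the over/under strand, since Figure~\ref{figure_labels_kauffman} assigns the labels $1, t^{1/2}, 1, t^{-1/2}$ to the four corners in a specific cyclic pattern determined by the orientations and the over-strand. The key arithmetic input is that with $2x = t^{1/2} + t^{-1/2}$ one has $t^{1/2}\cdot t^{-1/2} = 1$, $(t^{1/2})^2 + (t^{-1/2})^2 = 4x^2 - 2$, hence $2(t^{1/2})(t^{-1/2}) \cdot$ nothing — more precisely $1 \cdot 1 = 1$, $1 \cdot t^{\pm 1/2}$ contributes the off-diagonal $x$-type entries after symmetrization, and the diagonal entry at region $i$ (where, reading Figure~\ref{figure_labels_kauffman}, two corners labelled $1$ and $t^{1/2}$, or the relevant pair, abut) becomes $2x^2 - 1$ once one adds the squares of the two labels meeting there and uses $t^{1/2}+t^{-1/2} = 2x$. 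I would then simply expand $v v^T$ for the explicit label vector and check entry-by-entry that it reproduces the matrix~\eqref{eq:tau}, remembering the convention that when a region abuts a corner from two sides the labels add (which matches the stated convention for $\tau_D$ that rows/columns are added for non-reduced diagrams, so the two conventions are compatible and the reduction to the reduced local model is legitimate).

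The main obstacle I anticipate is bookkeeping rather than conceptual: getting the labelling conventions of the two figures to line up exactly — in particular matching Kashaev's cyclic order $i,j,k,\ell$ around a crossing (Figure~\ref{fig:regions}) with Kauffman's placement of $1, t^{1/2}, 1, t^{-1/2}$ relative to the orientation of the strands (Figure~\ref{figure_labels_kauffman}), and checking that the sign $\sgn(c)$ is pulled out consistently in both models. One subtlety worth isolating: in~\eqref{eq:tau} the regions $i$ and $k$ (the two "distinguished" ones carrying $2x^2-1$ on the diagonal) must correspond to the two corners where the Kauffman labels are $t^{\pm1/2}$, while $j,\ell$ carry $1$; the off-diagonal pattern ($x$ between an $i/k$ region and a $j/\ell$ region, $1$ between the two $j/\ell$ corners, and $2x^2 - 1$-off-diagonal... actually $1$ between $i$ and $k$) must then fall out automatically from $vv^T$, and verifying this forces the precise matching of the two conventions. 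Once the local $4\times 4$ identity is established for both crossing signs, summing over all crossings and invoking the additivity of both sides — together with the stated rule for adding rows/columns at repeated regions — gives the global identity, completing the proof of Proposition~\ref{proposition_alexander}.
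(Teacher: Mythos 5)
Your reduction to a single local identity per crossing fails: the equation $\sgn(c)\,k_ck_c^T=\tau_c$ is false, and no choice of corner labels can make it true. Indeed, the matrix $k_ck_c^T$ has rank $1$, whereas the $4\times4$ matrix in Equation~\eqref{eq:tau} has rank $2$ for generic $x$ (its rows $j$ and $\ell$ coincide and every $3\times3$ minor vanishes, but the minor $\det\begin{pmatrix}2x^2-1 & x\\ x & 1\end{pmatrix}=x^2-1\neq 0$). Concretely, the mismatch already shows up entrywise: with $v=(v_i,v_j,v_k,v_\ell)$ the vector of Kauffman labels, one has $v_iv_j=t^{\pm1/2}$, which is not $x=\tfrac12(t^{1/2}+t^{-1/2})$, and $v_i^2=t^{\pm1}$, which is not $2x^2-1=\tfrac12(t+t^{-1})$. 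Your own observation that $(t^{1/2})^2+(t^{-1/2})^2=4x^2-2$ is the tell: the two labels $t^{1/2}$ and $t^{-1/2}$ that must be combined to produce $2(2x^2-1)$ sit at corners of \emph{different} crossings incident to the same region, not at a single crossing (at a reduced crossing each region abuts exactly one corner, so there is only one label there, contrary to what you assume for the diagonal entries).

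The identity $\tau_D=K(D)^TS(D)K(D)$ therefore only holds after summing over all crossings, with cancellations and recombinations \emph{across} crossings. This is how the paper argues: it computes $(K^TSK)_{ij}=\sum_{c\sim i,\,c\sim j}\sgn(c)K_{ci}K_{cj}$ and splits into cases. For $i\neq j$ of the same checkerboard colour (the pairs $\{i,k\}$ and $\{j,\ell\}$ of Figure~\ref{fig:regions}), each crossing does contribute $\sgn(c)\cdot 1$ and the comparison is local, as you expected. But for $i\neq j$ of different colours, the two crossings at the ends of a shared edge contribute $\sgn(c)t^{1/2}$ and $\sgn(c)t^{-1/2}$, which only jointly give $2x=x+x$; and for the diagonal entries, the ``incoherently oriented'' corners around a fixed region come in pairs whose contributions $t$ and $t^{-1}$ sum to $2(2x^2-1)$. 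You would need to replace your crossing-by-crossing verification with this edge-pairing and corner-pairing argument (or an equivalent global bookkeeping) for the proof to go through.
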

	
	\begin{proof}
The proof consists in expanding the matrix on the right-hand side of the equality (that we write~$K^T S K$ for simplicity), and comparing it to~$\tau_D$
evaluated at~$2x = t^{1/2} + t^{-1/2}$. Given any regions~$i,j$ of~$D$, we have
\[
\left(K^T S K\right)_{ij}=\sum_{c\sim i,\,c\sim j}\sgn(c) K_{ci}K_{cj}\,,
\]
where the sum is over all the crossings of~$D$ incident to the region~$i$ and to the region~$j$, and the coefficients of~$K$ are given by the labels of Figure~\ref{figure_labels_kauffman}.
	
If~$i$ and~$j$ are different regions of the same colour, then each crossing~$c$ incident to both~$i$ and~$j$
contributes~$\sgn(c)$ to the coefficient~$\left(K^T S K\right)_{ij}$. This is precisely the contribution of~$c$ to~$\left(\tau_D\right)_{ij}$ (recall Equation~\eqref{eq:tau}), so this case is checked.
		
Let us now assume that~$i$ and~$j$ are two regions of different colours. Then, every edge of~$D$ adjacent to~$i$ and~$j$ gives two contributions to~$\left(K^T S K\right)_{ij}$, one for each crossing adjacent to the edge: these contributions sum up to~$t^{1/2}+t^{-1/2}=2x$ (resp. $-t^{1/2}-t^{-1/2}=-2x$) if both crossings are positive (resp. negative), and vanish if the crossings have different signs. This coincides with the contributions of these two crossings to~$\left(\tau_D\right)_{ij}$.

Finally, let us assume that~$i=j$. If the orientation of~$D$ near a crossing~$c$ incident to~$i$ yields a coherent orientation of the region~$i$, then the crossing~$c$ contributes~$\sgn(c)$ to the~$i^\text{th}$ diagonal coefficient of both matrices~$\tau_D$ and~$K^T S K$.
If this is not the case, then the crossing~$c$ yields different polynomial coefficients to both these matrices.
Note that the number of contributions of crossings incident to the fixed region~$i$ giving such different coefficients is always even. (If the diagram is reduced, the number of such crossings is even, but in general,
one must consider crossing contributions.) Hence, we can start from one such contribution, move along the boundary of the region~$i$ in a fixed direction, and group them by pairs of consecutive such contributions.
The sum of two consecutive contributions is~$t + t^{-1}=2(2x^2-1)$ (resp. $-t - t^{-1}=-2(2x^2-1)$) if both crossings are positive (resp. negative) and $0$ if the crossings have different signs.
Once again, this coincides with the sum of these two contributions to~$\left(\tau_D\right)_{ii}$.
\end{proof}

\begin{proof}[Proof of Theorem~\ref{thm}~(i)]
Let~$D$ be a diagram for an oriented link~$L$, and let~$\widetilde{K}(D)$ denote the Kauffman matrix~$K(D)$ with two columns corresponding to adjacent regions removed. By Proposition~\ref{proposition_alexander}, we have
\begin{equation}
\label{eq:K-K}
\widetilde{\tau}_D=\widetilde{K}(D)^TS(D)\widetilde{K}(D)\,.
\end{equation}
The equality~$\det\widetilde{\tau}_D=\pm\Delta_L(t)^2$ now follows from Kauffman's model in the form of Equation~\eqref{eq:Kauffman}.
\end{proof}
	
\begin{rem}\label{rem:Kauffman}
Note that the relation~\eqref{eq:K-K} between the Kauffman and Kashaev matrices not only allows us
to prove the Alexander polynomial part of Kashaev's conjecture. It also shows that this latter statement is
actually equivalent to Kauffman's result. Therefore, an independent proof of the first point of Conjecture~\ref{conj} would automatically provide a new proof of Kauffman's model for the Alexander polynomial.
\end{rem}

Applying Remark~\ref{remark_congruence}, we immediately get the following corollary.
	
\begin{cor}\label{corollary_congruence_kauffman}
The matrix~$\tau_D$ is congruent to~$\widetilde{\tau}_D\oplus(0)^{\oplus 2}$, with~$\widetilde{\tau}_D=\widetilde{K}(D)^T S(D)\widetilde{K}(D)$.\qed
\end{cor}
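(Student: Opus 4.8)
The plan is to read off the statement from Proposition~\ref{proposition_alexander} by transporting along it the column reduction recorded in Remark~\ref{remark_congruence}. Nothing more than linear algebra is involved.

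First I would phrase the reduction of Remark~\ref{remark_congruence} as a matrix identity. That reduction is performed by successively adding to each of the two distinguished columns of~$K(D)$ a scalar multiple of one of the remaining columns, so it amounts to right multiplication by an invertible matrix~$P$ (a product of transvections, hence $\det P=1$) with
\[
K(D)\,P=\big(\widetilde{K}(D)\;\;0\;\;0\big)\,.
\]
Conjugating the identity~$\tau_D=K(D)^T S(D)K(D)$ of Proposition~\ref{proposition_alexander} by~$P$ then yields
\[
P^T\tau_D\,P=\big(K(D)P\big)^T S(D)\big(K(D)P\big)=\big(\widetilde{K}(D)\;\;0\;\;0\big)^T S(D)\big(\widetilde{K}(D)\;\;0\;\;0\big)\,,
\]
and since $S(D)$ is diagonal the two vanishing columns of $K(D)P$ produce two vanishing rows and columns on the right, so that
\[
P^T\tau_D\,P=\widetilde{K}(D)^T S(D)\widetilde{K}(D)\oplus(0)^{\oplus 2}=\widetilde{\tau}_D\oplus(0)^{\oplus 2}\,,
\]
the last equality being~\eqref{eq:K-K}. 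This is the asserted congruence.

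The only point that deserves attention — and the closest thing to an obstacle — is the ring over which~$P$, and hence the congruence, lives. For the Alexander-polynomial application one only takes determinants, so any ring containing the entries of $K(D)$, e.g. $\Z[t^{\pm1/2}]$, is enough. To use the corollary for signatures one instead needs the congruence to persist after evaluating at a real value of~$x$; this is harmless, since $\tau_D[x]$ is then a real symmetric matrix and two of its columns being complex-linear combinations of the others forces them (taking real parts) to be real-linear combinations of the others, so the congruence descends to a real — hence signature-preserving — one. A short inspection of Kauffman's corner labels in Figure~\ref{figure_labels_kauffman} in fact shows that~$P$ may already be chosen over~$\Z[2x]$, which is what makes the congruence an honest congruence of $\Z[2x]$-matrices and explains the word ``immediately''. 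With this bookkeeping settled the statement is a formal consequence of Proposition~\ref{proposition_alexander} and Remark~\ref{remark_congruence}.
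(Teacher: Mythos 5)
Your proof is correct and is exactly the argument the paper intends: the corollary is stated with no proof beyond the observation that it follows by combining Proposition~\ref{proposition_alexander} with the column reduction of Remark~\ref{remark_congruence}, which is precisely your computation $P^T\tau_D P=(K(D)P)^T S(D)(K(D)P)=\widetilde{\tau}_D\oplus(0)^{\oplus 2}$ (the diagonality of $S(D)$ is not actually needed for the last step). The one questionable side remark is the claim that $P$ may be chosen over $\Z[2x]$ --- the column relations for $K(D)$ involve non-symmetric powers of $t^{1/2}$, so this is not clear --- but as you yourself note this is immaterial, since for the determinant statement any overring suffices and for the signature applications your real-parts argument supplies the required real congruence.
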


Using this relation to the Alexander polynomial, we now study the jumps of Kashaev's signature, defined as
\[
J^{\pm}(x) := \pm (\lim\limits_{y\to x^{\pm}} \sign(\tau_D[y]) - \sign(\tau_D[x]))\,.
\]
To do so, we denote by~$\mult_\omega(\Delta_L)$ the multiplicity of~$\omega\in S^1$ as a root of the polynomial~$\Delta_L$.
	
	\begin{lem}\label{proposition_kashaev_jumps}
		Let~$L$ be a link with non-vanishing Alexander polynomial~$\Delta_L$, and let~$D$ be a diagram of~$L$.
		After the change of variable $2x = \omega^{1/2}+\omega^{-1/2}$ with~$\omega\in S^1$, the signature~$\sign(\tau_D[x])$ becomes a step function on~$S^1$ which can have discontinuities only at roots of~$\Delta_L(t)$, and whose jumps satisfy~$|J^{\pm}(\omega)| \leq 2\mult_\omega(\Delta_L)$.
	\end{lem}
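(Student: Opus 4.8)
The plan is to pull the statement back to the smaller real symmetric matrix $\widetilde{\tau}_D$, whose determinant is computed by Theorem~\ref{thm}~(i), and then apply two standard facts: continuity of eigenvalues of a symmetric family, and the inequality ``$\dim\ker\,\le\,$order of vanishing of $\det$''. By Corollary~\ref{corollary_congruence_kauffman}, for every real $x$ the matrices $\tau_D[x]$ and $\widetilde{\tau}_D[x]\oplus(0)^{\oplus 2}$ are congruent real symmetric matrices, hence have the same signature, so it suffices to analyse $x\mapsto\sign(\widetilde{\tau}_D[x])$. Performing the substitution $2x=\omega^{1/2}+\omega^{-1/2}$ and writing $\omega=e^{i\theta}$, so that $x=\cos(\theta/2)$, I would set $M(\theta):=\widetilde{\tau}_D[\cos(\theta/2)]$: this is a real-analytic one-parameter family of real symmetric matrices, and by Theorem~\ref{thm}~(i) its determinant equals, up to a fixed sign, the value at $t=e^{i\theta}$ of the Laurent polynomial $\Delta_L(t)^2\in\Z[t^{\pm1}]$. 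Since $\Delta_L\neq 0$ this is a nonzero real-analytic function of $\theta$, vanishing precisely at the finitely many $\theta$ for which $e^{i\theta}$ is a root of $\Delta_L$, and to order $2\mult_{e^{i\theta}}(\Delta_L)$ at each such point, because $\theta\mapsto e^{i\theta}$ has nonvanishing derivative.

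The first assertion is then immediate: where $e^{i\theta}$ is not a root of $\Delta_L$, the matrix $M(\theta)$ is invertible, and as its eigenvalues depend continuously on $\theta$ and cannot change sign without passing through $0$, the integer $\sign M(\theta)$ is locally constant. Hence $\sign(\tau_D[x])$ is a step function whose discontinuities lie among the roots of $\Delta_L$ on $S^1$.

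For the jump bound, I would fix a root $\omega_0=e^{i\theta_0}$ of $\Delta_L$ and set $\nu:=\dim\ker M(\theta_0)$ and $m:=\mult_{\omega_0}(\Delta_L)$. First one shows $\nu\le 2m$: after conjugating $M$ by a fixed orthogonal matrix so that $\ker M(\theta_0)$ is the span of the last $\nu$ coordinate vectors, the complementary top-left block of $M(\theta_0)$ is invertible, so the Schur-complement formula for $\det M(\theta)$ applies near $\theta_0$ and exhibits a zero of order at least $\nu$ there; against the order $2m$ found above, this forces $\nu\le 2m$. Next, $M(\theta)$ is invertible on a punctured neighbourhood of $\theta_0$, and by continuity of eigenvalues the eigenvalues of $M(\theta_0)$ that are nonzero keep their signs for $\theta$ near $\theta_0$; since only the remaining $\nu$ eigenvalues can move across $0$, one gets $|\sign M(\theta)-\sign M(\theta_0)|\le\nu$ for all $\theta$ near $\theta_0$. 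Letting $\theta\to\theta_0^{\pm}$ (the one-sided limits exist, $\sign M$ being a step function) and transporting back through the substitution yields $|J^{\pm}(\omega_0)|\le\nu\le 2m$, as required. The value $\omega_0=1$ needs no separate treatment: one just runs the same local analysis of $M(\theta)$ near $\theta_0=0$.

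I do not foresee a genuine obstacle; the proof is essentially bookkeeping around Theorem~\ref{thm}~(i). The point worth underlining is that $J^{\pm}$ compares the one-sided limits of $\sign(\tau_D[x])$ with its value \emph{at} $\omega_0$, where $M(\theta_0)$ is typically more degenerate than $M(\theta)$ on either side; consequently the estimate must be phrased through the full nullity $\dim\ker M(\theta_0)$ rather than through the (generically smaller) gap between the two one-sided limits. This is exactly what makes the factor $2$ in $2\mult_\omega(\Delta_L)$ appear, given that $\det\widetilde{\tau}_D$ is a perfect \emph{square}.
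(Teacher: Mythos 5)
Your proposal is correct and follows essentially the same route as the paper: reduce to $\widetilde{\tau}_D$ via Corollary~\ref{corollary_congruence_kauffman}, use $\det\widetilde{\tau}_D=\pm\Delta_L^2\neq 0$ to locate the possible discontinuities, bound the jumps by the nullity at $\omega$, and bound that nullity by the order of vanishing of the determinant. The only difference is local: where you establish the inequality $\dim\ker\le\operatorname{ord}_{\omega}(\det)$ by an analytic Schur-complement argument, the paper obtains it by diagonalizing $\widetilde{\tau}_D$ over the principal ideal domain $\R[t^{\pm1/2}]$ and counting the diagonal entries vanishing at $\omega$; both are valid, and your version also spells out the eigenvalue-continuity step that the paper leaves implicit.
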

	
	\begin{proof}
		By Corollary \ref{corollary_congruence_kauffman}, it is enough to study the jumps of the signature of~$\widetilde{\tau}_D=\widetilde{K}(D)^T S(D)\widetilde{K}(D)$. Moreover, since~$\det\widetilde{\tau}_D=\pm \Delta_L^2 \neq 0$, it follows that the signature of $\widetilde{\tau}(D)$ can jump only at the roots of~$\Delta_L$, and that the jumps~$|J^{\pm}(\omega)|$ are bounded by the nullity of~$\widetilde{\tau}_D$ at~$t=\omega$.
Let us consider~$\widetilde{\tau}_D$ as a matrix with coefficients in~$\mathbb{R}[t^{\pm1/2}]$. Since this ring is a principal ideal domain, there exist matrices~$P,Q \in\operatorname{GL}(\mathbb{R}[t^{\pm1/2}])$ such that
		\[
		P\,\widetilde{\tau}_D\,Q = \begin{pmatrix}
			d_1 & & \\ & \ddots & \\ & & d_n\\
		\end{pmatrix}
		\]
with $d_i \in \mathbb{R}[t^{\pm1/2}]$. In particular, we have~$d_1\cdots d_n =\Delta_L^2$ up to multiplication
by units of~$\mathbb{R}[t^{\pm1/2}]$.
But the nullity of~$\widetilde{\tau}_D$ at~$t=\omega$ is the number of~$d_i$ such that~$d_i(\omega) = 0$, which is bounded by~$\mult_\omega(\Delta_L^2).$  Therefore, we get the inequality~$|J^{\pm}(\omega)| \leq\mult_\omega(\Delta_L^2) = 2\mult_\omega(\Delta_L)$.		
	\end{proof}
	
	\subsection{The classical signature}
	\label{sub:sign}
	
	We now study the relation of Kashaev's matrix with the classical signature $\sigma(L)=\sigma_L(-1)$. Under the usual change of variables~$2x = t^{1/2} + t^{-1/2}$, this corresponds to studying~$\tau_D$ at~$x=0$.
	
Consider an oriented link diagram~$D$ whose regions are coloured in a checkerboard manner with two colours~$\sf{b}$ and~$\sf{w}$. For any colour $\sf{v} \in \{\sf{b},\sf{w}\}$ and any crossing~$c$, recall the signs~$\eta_{\sf{v}}(c)$ and $t_{\sf{v}}(c)$ defined in Figure~\ref{figure_eta_t}. The proof of the following result is immediate.
	
	\begin{lem}\label{lemma_sign}
		For any colour $\sf{v}$ and crossing $c$, we have the equality~$\eta_{\sf{v}}(c) t_{\sf{v}}(c) = \sgn(c)$.\qed
	\end{lem}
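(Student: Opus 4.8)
The statement to prove is Lemma~\ref{lemma_sign}: for any checkerboard colour $\sf{v}$ and any crossing $c$, one has $\eta_{\sf{v}}(c)\,t_{\sf{v}}(c)=\sgn(c)$. This is a purely local verification at a single crossing, so the plan is simply to enumerate the finitely many local pictures and check the identity in each.

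The plan is to fix a crossing $c$ and a shaded region $\sf{v}$, and to run through the cases determined by the three independent pieces of local data: the over/under information (which alone determines $\eta_{\sf{v}}(c)$ via Figure~\ref{figure_eta_t}), and the two strand orientations (which together with the shading determine $t_{\sf{v}}(c)$, again via Figure~\ref{figure_eta_t}); the sign $\sgn(c)$ is then read off from the over/under data together with the orientations in the usual way. First I would note that, having fixed which of the two regions opposite the crossing is shaded, there are two choices for $\eta_{\sf{v}}(c)\in\{+1,-1\}$ and, independently, two configurations of the orientations relative to the shaded wedge, giving $t_{\sf{v}}(c)\in\{+1,-1\}$; this is four cases. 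In each, the crossing sign $\sgn(c)$ is the product of a contribution from the over/under choice and a contribution from the orientations, and comparing with Figure~\ref{figure_eta_t} shows this product equals $\eta_{\sf{v}}(c)\,t_{\sf{v}}(c)$.

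Concretely I would organise it as follows. Rotating the diagram if necessary, assume the shaded regions $\sf{v}$ are the left and right wedges at $c$. If the two strands are oriented so that $t_{\sf{v}}(c)=+1$ (the orientations are as in the left picture of the right-hand panel of Figure~\ref{figure_eta_t}), then one checks directly that the crossing with $\eta_{\sf{v}}(c)=+1$ is positive and the crossing with $\eta_{\sf{v}}(c)=-1$ is negative, so $\sgn(c)=\eta_{\sf{v}}(c)=\eta_{\sf{v}}(c)\,t_{\sf{v}}(c)$. Reversing one of the two strand orientations flips $t_{\sf{v}}(c)$ to $-1$ and simultaneously flips $\sgn(c)$ while leaving $\eta_{\sf{v}}(c)$ unchanged (it depends only on over/under), so the identity $\sgn(c)=\eta_{\sf{v}}(c)\,t_{\sf{v}}(c)$ persists. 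This exhausts all cases, since swapping which opposite wedge is shaded amounts to a $90^\circ$ rotation, which changes neither side of the claimed equality.

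There is essentially no obstacle here: the only thing to be careful about is a consistent convention for $\sgn(c)$ (right-hand rule / the standard $\pm$ at a positive crossing) and making sure the orientation reference pictures in Figure~\ref{figure_eta_t} are matched to it correctly; once the conventions are pinned down, the four-case check is immediate, which is why the statement is asserted with proof omitted.
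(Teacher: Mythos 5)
Your proof is correct and is exactly the direct local case-check that the paper has in mind: the paper omits the argument entirely (the lemma is stated with ``The proof of the following result is immediate''), and your four-case verification is the intended justification. Nothing to add.
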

	
Recall the definition of the correction term~$\mu_{\sf{v}}(D) = -\sum_{c\,:\,t_{\sf{v}}(c)=-1}\eta_{\sf{v}}(c)$.
	
	\begin{lem} \label{lemma_writhe}
	For any diagram~$D$, we have~$\mu_{\sf{w}}(D) + \mu_{\sf{b}}(D) = w(D)$.
	\end{lem}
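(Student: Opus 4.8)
The plan is to show that every crossing $c$ of $D$ contributes to exactly one of the two sums defining $\mu_{\sf w}(D)$ and $\mu_{\sf b}(D)$, and that when it does, its contribution is $\sgn(c)$. Combined with the fact that $\sum_c \sgn(c) = w(D)$, this gives the claimed identity. By Lemma~\ref{lemma_sign} we have $\eta_{\sf v}(c)t_{\sf v}(c) = \sgn(c)$ for both colours $\sf v$, so $-\eta_{\sf v}(c) = \sgn(c)$ precisely when $t_{\sf v}(c) = -1$; hence the term contributed by $c$ to $\mu_{\sf v}(D)$, when it appears, is exactly $\sgn(c)$, independently of $\sf v$. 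It therefore remains to verify that for each crossing $c$ exactly one of $t_{\sf w}(c) = -1$ and $t_{\sf b}(c) = -1$ holds, i.e. that $t_{\sf w}(c) \cdot t_{\sf b}(c) = -1$.

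The key step is thus the local, orientation-dependent claim $t_{\sf w}(c) = -t_{\sf b}(c)$ at every crossing. First I would recall from Figure~\ref{figure_eta_t} that $t_{\sf v}(c)$ depends only on the orientation of the two strands through $c$ relative to the shaded (colour-$\sf v$) region, and not on the over/under information. There are, up to rotation, two configurations of the orientations at a crossing: the two strands can be oriented so that the shaded pair of opposite corners is ``source-and-sink'' for the induced orientation of the region, or so that it is not. Switching the shading $\sf w \leftrightarrow \sf b$ exchanges the two pairs of opposite corners, which is exactly the operation that toggles $t_{\sf v}(c)$ between $+1$ and $-1$ according to the definition in Figure~\ref{figure_eta_t}. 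I would make this explicit by drawing (or describing) the four oriented local pictures and reading off $t_{\sf w}$ and $t_{\sf b}$ in each, checking that they always have opposite sign.

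Granting $t_{\sf w}(c) = -t_{\sf b}(c)$, for each crossing $c$ exactly one of the colours $\sf v$ satisfies $t_{\sf v}(c) = -1$, and for that colour the summand is $-\eta_{\sf v}(c) = \sgn(c)$ by Lemma~\ref{lemma_sign}. Summing over all crossings,
\[
\mu_{\sf w}(D) + \mu_{\sf b}(D)
= \sum_{c\,:\,t_{\sf w}(c)=-1} \sgn(c) + \sum_{c\,:\,t_{\sf b}(c)=-1} \sgn(c)
= \sum_{c} \sgn(c) = w(D),
\]
since the two index sets partition the set of all crossings of $D$.

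I expect the only real obstacle to be bookkeeping in the case-check $t_{\sf w}(c) = -t_{\sf b}(c)$: one must be careful that the convention in Figure~\ref{figure_eta_t} (the sign $t_{\sf v}(c)$ for $\sf v$ the shaded region) is applied consistently, and that rotating the local picture by $90^\circ$ swaps the roles of black and white without introducing a spurious sign. Once the local picture is pinned down this is immediate; there is no global or homological input needed.
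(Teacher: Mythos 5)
Your proposal is correct and rests on the same two ingredients as the paper's proof, namely Lemma~\ref{lemma_sign} and the colour-swap identity $t_{\sf w}(c)=-t_{\sf b}(c)$ (which the paper uses implicitly when it rewrites the sum over $\{c: t_{\sf b}(c)=-1\}$ as a sum over $\{c: t_{\sf w}(c)=1\}$). Your reorganization --- applying the lemma first so that each crossing contributes $\sgn(c)$ to exactly one of the two sums, which then partition the crossings --- is just a cleaner bookkeeping of the same argument.
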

	
	\begin{proof}
		The definition of~$\mu_{\sf{v}}$ together with Lemma~\ref{lemma_sign} yield
		\begin{align*}
			\mu_{\sf{w}}(D) + \mu_{\sf{b}}(D) = & \sum\limits_{c\,:\,t_{\sf{w}}(c)=-1} -\eta_{\sf{w}}(c) + \sum\limits_{c\,:\,t_{\sf{b}}(c)=-1} -\eta_{\sf{b}}(c) \\
			& = \sum\limits_{c\,:\,t_{\sf{w}}(c)=-1} -\eta_{\sf{w}}(c) + \sum\limits_{c\,:\,t_{\sf{w}}(c)=1} \eta_{\sf{w}}(c) \\
			& = \sum\limits_{c\,:\,t_{\sf{w}}(c)=-1} t_{\sf{w}}(c)\eta_{\sf{w}}(c) + \sum\limits_{c\,:\,t_{\sf{w}}(c)=1} t_{\sf{w}}(c)\eta_{\sf{w}}(c) \\
			& = \sum\limits_{c\,:\,t_{\sf{w}}(c)=-1} \sgn(c) + \sum\limits_{c\,:\,t_{\sf{w}}(c)=1} \sgn(c) = w(D)\,.\qedhere
		\end{align*}
	\end{proof}
	
	Let us now consider the matrix~$\tau_D[x]$ evaluated at~$x=0$.  Note that	if two regions~$i$ and~$j$ have different colours,
	then the corresponding coefficient~$\tau_D[0]_{ij}$ vanishes.
	Hence, we see that~$\tau_D[0]$ splits as the direct sum
	\[
	\tau_D[0] = \tau_{\sf{w}}(D) \oplus \tau_{\sf{b}}(D)\,,
	\]
	where the matrix~$\tau_{\sf{w}}(D)$ (resp. $\tau_{\sf{b}}(D)$) is indexed by the white (resp. black) regions of~$D$. For any colour~$\sf{v}\in\{\sf{w},\sf{b}\}$, the definition of~$\tau_D$ and Lemma~\ref{lemma_sign} yield
\[
\tau_{\sf{v}}(D)_{ij} = \begin{cases}
		\sum\limits_{c\sim i,\, c\sim j} \eta_{\sf{v}}(c)t_{\sf{v}}(c), & \text{ if } i\neq j \\
		\sum\limits_{c \sim i}  -\eta_{\sf{v}}(c), & \text{ if } i=j,
	\end{cases}
	\]
where the first (resp.  second) sum is over all crossings incident to both regions~$i$ and~$j$ (resp. incident to the region~$i$).
	Moreover, given two regions~$i$ and~$j$ of the same colour~$\sf{v}$, for any crossing~$c$ incident to~$i$ and~$j$, the sign~$t_{\sf{v}}(c)$ depends only on~$i$ and~$j$, and can therefore be denoted by~$t_{ij}$. We get
\[
\tau_{\sf{v}}(D)_{ij} = \begin{cases}
		t_{ij}\sum\limits_{c\sim i,\, c\sim j} \eta_{\sf{v}}(c), & \text{ if } i\neq j \\
		\sum\limits_{c \sim i}  -\eta_{\sf{v}}(c), & \text{ if } i=j\,.
	\end{cases}
\]
Comparing this with the definition of the Goeritz matrices in Section~\ref{paragraph_gordon_litherland}, we see that~$\tau_D[0]$ is very close to the direct sum~$G_{\sf{w}}(D) \oplus G_{\sf{b}}(D)$: the only differences are the signs~$t_{ij}$ that appear in~$\tau_{\sf{v}}(D)$ but not in~$G_{\sf{v}}(D)$.

Everything is now set up for the proof of the conjecture for the classical signature.
	
	\begin{proof}[Proof of Theorem~\ref{thm}~(ii)]
Let~$D$ be a diagram for an oriented link~$L$. Applying twice the Gordon-Litherland formula~\eqref{eq:GL} together with Lemma~\ref{lemma_writhe}, we have
\[
2 \sigma(L) = \sign G_{\sf{w}}(D) - \mu_{\sf{w}}(D) + \sign G_{\sf{b}}(D) - \mu_{\sf{b}}(D) = \sign G_{\sf{w}}(D) + \sign G_{\sf{b}}(D) - w(D)\,.
\]
Therefore, we only need to find a congruence between~$\tau_D[0] = \tau_{\sf{w}}(D) \oplus \tau_{\sf{b}}(D)$ and $G_{\sf{w}}(D) \oplus G_{\sf{b}}(D)$.
		
To simplify the arguments, we now use the well-known fact that every link admits a {\em special diagram\/} (see e.g.~\cite[Proposition 13.14]{burde_knots_2003}). This can be understood as a diagram such that for every colour~$\sf{v}$, the sign~$t_{\sf{v}}(c)$ does not depend on~$c$. Taking~$D$ special and choosing the checkerboard colouring such that~$t_{\sf{w}}(c)=1$ for all~$c$,  we get~$\tau_{\sf{w}}(D) = G_{\sf{w}}(D)$ and
\[
\tau_{\sf{b}}(D)_{ij} = \begin{cases}
		\sum\limits_{c\sim i,\, c\sim j} -\eta_{\sf{b}}(c), & \text{ if } i\neq j \\
		\sum\limits_{c \sim i}  -\eta_{\sf{b}}(c), & \text{ if } i=j\,.
	\end{cases}
	\]
We are now left with proving that~$\tau_{\sf{b}}(D)$ is congruent to~$G_{\sf{b}}(D)$.
Since we have~$t_{\sf{w}}(c)=1$ for all~$c$, any white region abuts an even number of crossings.
It follows that the adjacency graph associated to the black regions is a bipartite graph.
This implies the equality~$\tau_{\sf{b}}(D) = P^T G_{\sf{b}}(D) P$, with~$P$ the diagonal matrix with diagonal
coefficient equal~$1$ or~$-1$ according to whether the corresponding black region belongs to one set of the bipartition or the other. 
	\end{proof}

	\begin{rem}\label{rem:G-L}
Note that the relationship between~$\tau_D[0]$ and the direct sum of two Goeritz matrices not only allows us
to prove the classical signature part of Kashaev's conjecture. It also shows that this latter statement is equivalent to the Gordon-Litherland formula, provided one knows that~$\sign G_{\sf{v}}(D) - \mu_{\sf{v}}(D)$ is an invariant (an easy fact already established by Goeritz~\cite{Goeritz}). Therefore, the second point of Conjecture~\ref{conj} should be understood
as an extension of the Gordon-Litherland formula from the classical signature to the full Levine-Tristram signature.
	\end{rem}

	\subsection{The signature conjecture for definite knots}
	\label{sub:definite}
	
In this final section, we conclude the proof of Theorem~\ref{thm} by establishing the signature conjecture for definite knots. To do so, let us start by focusing on the behaviour of~$\tau_D[x]$ at $x=1$.

\begin{lem}\label{proposition_signature_writhe}
If~$D$ is a diagram for an oriented knot~$K$, then~$\sgn(\tau_D[1])=w(D)$.
\end{lem}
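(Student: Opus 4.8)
The plan is to evaluate everything at $x=1$, where the substitution $2x=t^{1/2}+t^{-1/2}$ forces $t=1$, and to exploit the factorisation of $\tau_D$ through the Kauffman matrix obtained in Proposition~\ref{proposition_alexander}. At $t=1$ all of Kauffman's labels in Figure~\ref{figure_labels_kauffman} become~$1$, so $K(D)[1]$ is simply the matrix whose $(c,i)$ entry counts the corners of the region~$i$ at the crossing~$c$; hence, by Proposition~\ref{proposition_alexander}, $\tau_D[1]=K(D)[1]^{T}S(D)K(D)[1]$. By Corollary~\ref{corollary_congruence_kauffman} this matrix is congruent over~$\R$ to $\widetilde{\tau}_D[1]\oplus(0)^{\oplus 2}$, with $\widetilde{\tau}_D[1]=\widetilde{K}(D)[1]^{T}S(D)\widetilde{K}(D)[1]$ and $\widetilde{K}(D)$ square. (The change-of-basis matrix of Remark~\ref{remark_congruence} only adds combinations of columns to two fixed columns, so it is unipotent and remains invertible after setting $t=1$; this is the one routine point to verify.)

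The crux is that $\widetilde{K}(D)[1]$ is invertible over~$\R$. Indeed, Kauffman's formula~\eqref{eq:Kauffman} gives $\det\widetilde{K}(D)=\pm\Delta_K(t)$, hence $\det\widetilde{K}(D)[1]=\pm\Delta_K(1)=\pm 1$, using the standard normalisation $\Delta_K(1)=\pm 1$ for a knot. Therefore $\widetilde{\tau}_D[1]=\widetilde{K}(D)[1]^{T}S(D)\widetilde{K}(D)[1]$ is congruent to $S(D)=\operatorname{diag}(\sgn(c_1),\dots,\sgn(c_n))$, whose signature is the number of positive crossings minus the number of negative ones, i.e.\ $w(D)$. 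Since congruent matrices have equal signature and zero blocks do not contribute, we conclude
\[
\sgn(\tau_D[1])=\sgn\big(\widetilde{\tau}_D[1]\big)=\sgn\big(S(D)\big)=w(D).
\]

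I do not expect a real obstacle: the only ingredient beyond the machinery already in place is the elementary fact $\Delta_K(1)=\pm1$ for a knot (equivalently, the non-singularity of $\widetilde{K}(D)[1]$), and this is precisely where the hypothesis ``knot'' rather than ``link'' enters, since for a link $\Delta_L(1)=0$ and $\widetilde{K}(D)[1]$ degenerates. Alternatively, one can bypass Proposition~\ref{proposition_alexander} and work directly with the crossing--region incidence matrix~$U$, writing $\tau_D[1]=U^{T}S(D)U$ and deducing the non-singularity of its square truncation $\widetilde{U}$ from $\det\widetilde{\tau}_D[1]=\det(\widetilde U)^2\det S(D)=\pm\Delta_K(1)^2=\pm1$ via Theorem~\ref{thm}~(i); but routing through Kauffman's matrix keeps the bookkeeping to a minimum.
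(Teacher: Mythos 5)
Your proof is correct and follows essentially the same route as the paper's: factor $\tau_D[1]$ through the Kauffman matrix via Corollary~\ref{corollary_congruence_kauffman}, observe that $\det\widetilde{K}(D)[1]=\pm\Delta_K(1)\neq 0$ because $K$ is a knot, and conclude by Sylvester's law of inertia that the signature equals $\sign(S(D))=w(D)$. The extra checks you flag (unipotence of the column operations at $t=1$, the alternative via the incidence matrix) are fine but not needed beyond what the paper already records.
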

	
	\begin{proof}
		By Corollary~\ref{corollary_congruence_kauffman}, the matrix~$\tau_D$ is congruent to~$\widetilde{\tau}_D\oplus(0)^{\oplus 2}$ with
\[
\widetilde{\tau}_D=\widetilde{K}(D)^T S(D)\widetilde{K}(D)\,,
\]
so~$\sign(\tau_D[x])=\sign(\widetilde{\tau}_D[\omega])$ for all~$x\in\R$ and~$\omega\in S^1$ with~$2x=\omega^{1/2}+\omega^{-1/2}$. Evaluating Equation~\eqref{eq:Kauffman} at~$x=1$, which corresponds to~$\omega=1$, we get that~$\widetilde{K}(D)[1]$ is an integer-valued matrix whose determinant
is equal to~$\pm\Delta_K(1)$. Since~$K$ is a knot, this value is non-zero. Therefore, by Silvester's law of inertia, we now have
\[
\sign(\tau_D[1])=\sign(\widetilde{\tau}_D[1])=\sign(\widetilde{K}(D)^T S(D)\widetilde{K}(D))[1] = \sign(S(D)) = w(D)\,.\qedhere
\]
\end{proof}
	
Now, everything is ready to conclude the proof of Theorem~\ref{thm}.
	
\begin{proof}[Proof of Theorem~\ref{thm}~(iii)]
The classical signature of an arbitrary knot~$K$ satisfies~$|\sigma(K)| \le 2g(K)$, with~$g(K)$ the genus of~$K$. If~$K$ is definite, then the opposite inequality holds by definition, yielding the equality~$|\sigma(K)| = 2g(K)$.
On the other hand, it is well-known that the span of the Alexander polynomial~$\Delta_K$ is bounded above by~$2g(K)$
(see e.g.~\cite[Proposition~6.13]{lickorish_introduction_1997}),
so~$\Delta_K$ has at most~$2g(K)$ roots counted with multiplicity. Moreover, the Levine-Tristram signature
of a knot vanishes close to~$\omega=1$ (see e.g.~\cite{gilmer_signature_2016}). Also, it can only have discontinuities at roots of~$\Delta_K$, and the jumps are bounded by the multiplicity of the corresponding roots (\cite[Theorem~2]{matumoto_signature_1977}; see also~\cite{gilmer_signature_2016}).
It follows that, since~$K$ is a definite knot, all the roots of the Alexander polynomial lie on the unit circle and all the jumps of the signature at discontinuities are maximal. The Levine-Tristram signature is therefore uniquely determined by the zeros of the Alexander polynomial (in~$S^1$).
		
Now, let~$D$ be a diagram for~$K$. By Theorem~\ref{thm}~(ii), we have
\[
\sign(\tau_D[0]) - w(D) =2 \sigma(K)
\]
and by Lemma~\ref{proposition_signature_writhe}, we have
\[
\sign(\tau_D[1]) - w(D) = 0\,.
\]
As above, by the bounds in Lemma~\ref{proposition_kashaev_jumps}, this forces all the jumps of~$\sign(\tau_D[x])$ to be maximal.
Since the jumps of~$\sign(\tau_D[x])$ are exactly the double of the jumps of~$\sigma_K(\omega)$, we therefore get the equality
\[
\sgn(\tau_D[x]) - w(D) = 2\sigma_K(\omega)
\]
for all~$\omega\in S^1\setminus\{1\}$, concluding the proof.
\end{proof}

\begin{rem}
\label{rem:disc}
Note that Theorem~\ref{thm}~(iii) holds for the classical Levine-Tristram signature, i.e. also when the signature function is evaluated at its points of discontinuity, but it does not hold for the two-sided average signature function.
\end{rem}

\begin{rem}
\label{rem:ext}
		In fact, our proof of Kashaev's signature conjecture applies to a wider class of knots,
		namely any knot~$K$ such that~$|\sigma(K)|$ is equal to the number of roots of~$\Delta_K$ on~$S^1$. This is for instance the case if~$K$ is the boundary of a Murasugi sum of two Seifert surfaces
		with symmetric, positive definite Seifert form, as proved by Liechti~\cite[Proposition~5.6]{liechti_spectra_2017}. This includes, in particular, all positive arborescent Hopf plumbings.
	\end{rem}

\bibliographystyle{plain}
\bibliography{Bibliography}
	
\end{document}